\tikzset{>=latex}
\title{The topological Petersson product.}
\author{L. Candelori}
\author{A. Salch}
\renewcommand{\smash}{\wedge}
\newtheorem{prop}{Proposition}[section]
\newtheorem{definition}[prop]{Definition}
\newtheorem{definition-proposition}[prop]{Definition-Proposition}
\newtheorem{theorem}[prop]{Theorem}
\newtheorem{corollary}[prop]{Corollary}
\theoremstyle{definition}
\newtheorem{example}[prop]{Example}
\newtheorem{examples}[prop]{Examples}
\DeclareMathOperator{\pt}{{\rm pt.}}
\DeclareMathOperator{\tmf}{{\it tmf}}
\DeclareMathOperator{\wt}{{\rm wt}}
\DeclareMathOperator{\petersson}{{\rm Petersson}}
\DeclareMathOperator{\tcf}{{\it tcf}}
\DeclareMathOperator{\Gal}{{\rm Gal}}
\newcommand{\ZZ}{\mathbb{Z}}
\begin{document}

\begin{abstract}
The nondegeneracy of the Petersson inner product on cusp forms, and the fact that Hecke operators are self-adjoint with respect to the Petersson product, together imply that the cusp forms have a basis consisting of Hecke eigenforms. In the literature on topological modular forms, no topological analogue of the Petersson product is to be found, and it is not known which topological spaces have the property that their topological cusp forms admit a basis consisting of eigenforms for the action of Baker's topological Hecke operators. In this note we define and study a natural topological Petersson product on complexified topological cusp forms, whose value on a one-point space recovers the classical Petersson product. We find that the topological Petersson product is usually degenerate: in particular, if $X$ is a space with nontrivial rational homology in any positive degree, then the Petersson product on complexified $tcf^*(X)$ is degenerate. Despite $tcf$-cohomology being a stable invariant, the topological Petersson product is an unstable invariant, vanishing on all suspensions. Nevertheless, we demonstrate nontriviality of the topological Petersson product by giving an explicit calculation of the topological Petersson product on complexified $tcf$-cohomology of the complex projective plane. We show that, for a compact Kahler manifold $X$, the Petersson product is nondegenerate on complexified $\tcf$-cohomology of $X$ in a range of Atiyah-Hirzebruch filtrations (essentially one-third of the possibly-nonzero Atiyah-Hirzebruch filtrations in the $tcf$-cohomology of $X$).
\end{abstract}

\maketitle
\tableofcontents

\section{Introduction}

The classical Petersson product of level $1$ cusp forms is the positive-definite Hermitian form given by the formula
\[ 
\langle f,g\rangle := \int_{\mathbb{H}/\mathrm{SL}_2(\ZZ)} f\bar{g} v^{k-2} du\,dv, \quad \tau = u + iv \in \mathbb{H} .\]
Each Hecke operator $T_m$ is self-adjoint with respect to the Petersson product on $S_k\otimes_{\mathbb{Z}}\mathbb{C}$. Consequently the action of each Hecke operator on $S_k$ is diagonalizable, and $S_k$ has a basis consisting of Hecke eigenforms; this is a fundamental, important result, since (among other reasons) the modular forms with $L$-functions expected to occur from irreducible degree $2$ Galois representations are the cusp forms, and for a normalized Hecke eigenform $f$ the $p$-th $L$-series coefficient is simply the eigenvalue of the action of $T_p$ on $f$. Having a basis for weight $k$ cusp forms consisting of Hecke eigenforms gives a powerful handle on their $L$-functions: we write a cusp form as a linear combination of Hecke eigenforms, and then on each summand we can recover the $L$-series coefficients from the Hecke eigenvalues.

It is quite satisfying to be able to get this fundamental result as consequences of the existence of a piece of {\em structure}, namely, an inner product on cusp forms with respect to which the Hecke operators are self-adjoint. One would like to see this same structure occur in the context of {\em topological} modular forms, with some similar consequences, like diagonalizability of the action of the Hecke operations on topological cusp forms. 

We define, in the most direct and na\"{i}ve possible way, a ``weight $j$ topological Petersson product'' 
\begin{align*} 
 \tcf^{-2k}(X) \otimes_{\mathbb{Z}} \tcf^{-2k}(X) 
  &\rightarrow H^{2(j-k)}(X; \mathbb{C})\end{align*}
for each unpointed finite CW-complex $X$.
When $X$ is a single point, this recovers the classical Petersson product in the case $j=k$, and is zero when $j\neq k$.
When $X$ is the complex projective plane, the weight $j$ topological Petersson product on $\tcf^{-2k}(X)$ vanishes unless $j=k$ or $j=k+1$, and recovers the classical Petersson product when $j=k$ or $j=k+1$. These examples are given in Examples \ref{product examples}.

Base-changing to $\mathbb{C}$, the topological Petersson product yields an $\mathbb{R}$-bilinear form
\begin{align*} 
 \left(\tcf^{-2k}(X) \otimes_{\mathbb{Z}}\mathbb{C}\right)\otimes_{\mathbb{R}} \left( \tcf^{-2k}(X)  \otimes_{\mathbb{Z}}\mathbb{C}\right) 
  &\rightarrow H^{2(j-k)}(X; \mathbb{C})\end{align*}
which is explicitly calculated in Proposition \ref{main prop} up to the Atiyah-Hirzebruch filtration, i.e., we calculate the topological Petersson product on the associated graded of the Atiyah-Hirzebruch filtration on $\tcf^{-2k}(X)\otimes_{\mathbb{Z}}\mathbb{C}$, where it depends only on the cup product on $H^*(X;\mathbb{Q})$ and the classical Petersson product.

We compute examples of topological Petersson products in Examples \ref{product examples}:
\begin{itemize}
\item On a single point, the topological Petersson product coincides with the classical Petersson product of cusp forms.
\item On a sphere of positive dimension, the topological Petersson product again coincides with the classical Petersson product of cusp forms, supported entirely on the basepoint. That is, the topological Petersson product is completely insensitive to the topology of the sphere.
\item However, the topological Petersson product over a space $X$ {\em does} detect some of the topology on $X$: we show that the topological Petersson product of topological cusp forms on $\mathbb{C}P^2$ agrees with the classical Petersson product on the basepoint, and it agrees with a weight-shifted copy of the classical Petersson product on the $2$-cell $S^2\subseteq \mathbb{C}P^2$. It is trivial on the topological cusp forms supported on the $4$-cell in $\mathbb{C}P^2$, however.
\end{itemize}
We give formula \eqref{product formula 200}, which provides an explicit and highly usable way to calculate the topological Petersson product of topological cusp forms over a space $X$, at least up to terms of higher Atiyah-Hirzebruch filtration. That is, formula \eqref{product formula 200} is an explicit formula for the topological Petersson product on the {\em associated graded} of the Atiyah-Hirzebruch (i.e., skeletal) filtration of $tcf^*(X)\otimes_{\mathbb{Z}}\mathbb{C}$.
One consequence is that the topological Petersson product is highly sensitive to the cup product structure of the cohomology of a space, and is consequently {\em not} a stable invariant of the space.

As another consequence, we get Corollary \ref{degeneracy of the petersson product}, which shows that, unfortunately, the topological Petersson product is degenerate on $\tcf^*(X)\otimes_{\mathbb{Z}}\mathbb{C}$ for finite CW-complexes $X$ with any nontrivial rational cohomology in positive degrees at all.

Here is the moral: in this note, we have carried out the most na\"{i}ve kind of investigation into what a Petersson product on topological modular forms might be, by simply considering the rational case. Rationalizing dramatically simplifies stable homotopy types, so one might expect this to trivialize all questions about topological Petersson products. Yet we see that the most basic and important question about the Petersson product---{\em is it degenerate, so that we can use it to show that the topological Hecke operators are diagonalizable, like how the argument goes for classical cusp forms?}---remains quite nontrivial even after our simplifications, and indeed, over any finite CW-complex with any nontrivial rational cohomology in positive degrees, the topological Petersson product is degenerate. Evidently some more involved techniques would be required to establish diagonalizability of topological Hecke operators on elliptic (co)homology.

Nevertheless, nondegeneracy of the topological Petersson product is not an entirely lost cause. We end on a small positive note: in Theorem \ref{compact kahler mfld thm}, we show that when $X$ is a compact K\"{a}hler manifold of complex dimension $d$, the topological Petersson product of $X$ is nondegenerate when restricted to the elements of $\tcf^*(X)\otimes_{\mathbb{Z}}\mathbb{C}$ of even Atiyah-Hirzebruch filtration $\leq 2d/3$. (When the rational homology of $X$ is concentrated in even degrees, then all elements of $\tcf^*(X)\otimes_{\mathbb{Z}}\mathbb{C}$ have even Atiyah-Hirzebruch filtration.)

Throughout this note, on various occasions, we have an $H\mathbb{C}$-module spectrum $X$ and a map of $H\mathbb{C}$-module spectra of the form 
\begin{equation}\label{bilinear form 1} X\smash_{H\mathbb{R}} X \rightarrow H\mathbb{C}.\end{equation} Naturally, this represents an $\mathbb{R}$-bilinear form on the spectrum $X$. If one is willing to keep track of a $\Gal(\mathbb{C}/\mathbb{R})$-action on the relevant spectra, then one could consider a map of the form 
\begin{equation}\label{bilinear form 2} X\smash_{H\mathbb{C}} \overline{X}\rightarrow H\mathbb{C},\end{equation}
i.e., a sesquilinear form on $X$. We suspect that, using equivariant spectra to keep track of the Galois action, the methods we pursue in this note in order to get an $\mathbb{R}$-bilinear Petersson product on $\tcf$-cohomology ought to also yield a sesquilinear Petersson product on $\tcf$-cohomology. However, bringing in ideas from equivariant stable homotopy would significantly increase the length and/or expected background knowledge of this note, while making only a modest improvement in the strength of the results, so we elected to consider only an $\mathbb{R}$-bilinear (rather than sesquilinear) Petersson product here.

We are grateful to Theo Johnson-Freyd for a helpful conversation early on during this project.

\section{Topological cusp forms.}
In this section we recall the construction of topological cusp forms. We learned about this construction from Theo Johnson-Freyd. Our understanding is that it is originally due to Lennart Meier (unpublished). 
Let $F$ denote the homotopy fiber of the usual $E_{\infty}$-ring spectrum map $tmf\rightarrow ko$. 
The rational stable homotopy groups of $F$ are then given by 
\[ \pi_*(F)\otimes_{\mathbb{Z}}\mathbb{Q} \cong S_{2*}\otimes_{\mathbb{Z}}\mathbb{Q} \oplus \Sigma^3 \mathbb{Q},\]
the graded $\mathbb{Q}$-vector space of level $1$ cusp forms\footnote{The cusp forms do not form a ring, since, for example, there is no weight $0$ cusp form, hence no multiplicative unit.}, with grading given by weight multiplied\footnote{Throughout, we will consistly write $S_{2*}$ to mean the graded $\mathbb{Z}[\frac{1}{6}]$-module of level $1$ cusp forms over $\mathbb{Z}\left[\frac{1}{6}\right]$ with grading given by weight multiplied by $2$. We write $S_{-2*}$ for the same graded $\mathbb{Z}[\frac{1}{6}]$-module but with the grading degrees also multiplied by $-1$. 
We write $S_k$ for the $\mathbb{Z}\left[\frac{1}{6}\right]$-module of weight $k$ level $1$ cusp forms over $\mathbb{Z}\left[\frac{1}{6}\right]$.} by $2$,
along with an additional copy of $\mathbb{Q}$ in degree $3$ coming from the Bott class in $\pi_4(ko)$ failing to be in the image of the map $\pi_*(tmf)\rightarrow\pi_*(ko)$, essentially because the weight $2$ Eisenstein series fails to be a modular form.

In fact the above is true after inverting the primes $2$ and $3$; it is not necessary to pass all the way to the rationalization. In particular, we have $\pi_n(F\left[\frac{1}{6}\right]) \cong 0$ if $n<3$, while $\pi_3(F\left[\frac{1}{6}\right])\cong \mathbb{Z}\left[\frac{1}{6}\right]$. We then have $\pi_n(F\left[\frac{1}{6}\right]) \cong 0$ for $n=4, 5, 6,7,8, \dots , 23$, and then we get $\pi_{24}(F\left[\frac{1}{6}\right])\cong \mathbb{Z}\left[\frac{1}{6}\right]$ generated by $\Delta$, and thereafter (i.e., for $n\geq 24$) we have that $\pi_n(F\left[\frac{1}{6}\right])$ is a free $\mathbb{Z}\left[\frac{1}{6}\right]$-module of rank $j$ if $n=24j+r$ with $r\equiv 0,8,12,16,20$ modulo $24$, rank $j-1$ if $n=24j+r$ with $r\equiv 2,4,6,10,14,18,22$ modulo $24$, and rank $0$ otherwise.
To narrow our focus to the rank of degrees where the homotopy groups of $F$ describe (away from $6$) exactly the classical level $1$ cusp forms, we define $\tcf$ as the $4$-connective cover of $F$. This yields the isomorphism of graded $\mathbb{Z}[\frac{1}{6}]$-modules 
$\pi_*(\tcf[\frac{1}{6}])\cong S_{2*}$. So, after inverting $6$, the homotopy groups of $\tcf$ are precisely the classical level $1$ cusp forms over $\mathbb{Z}[\frac{1}{6}]$.

It will not be important for the rest of this note, but it is perhaps satisfying to know that the spectrum $\tcf$ is also a $\tmf$-module, and the resulting action of $\pi_*(\tmf[\frac{1}{6}])$ on $\tcf[\frac{1}{6}]$ coincides with the usual action, by multiplication, of the ring $M_*$ of holomorphic level $1$ modular forms on its submodule of cusp forms. To see that this is so, first observe that the map $\tmf\rightarrow ko$ is a ring spectrum map, hence in particular a $\tmf$-module map, so its homotopy fiber $F$ is also a $\tmf$-module. 
Since $\tmf$ is a connective $E_{\infty}$-ring spectrum, we can take any $\tmf$-module $X$ and attach $\tmf$-cells (rather than $S^0$-cells, as in the stable version of the familiar ``attaching cells to kill higher homotopy'' construction from classical homotopy theory) to kill all the homotopy above some degree $n$, yielding a map of $\tmf$-modules $X\rightarrow X(-\infty,n]$. Our spectrum $\tcf$ is the fiber of the map $F\rightarrow F(-\infty,3]$, so $\tcf$ is the fiber of a map of $\tmf$-modules, so $\tcf$ is a $\tmf$-module. It is routine to check that the resulting $\pi_*(\tmf[\frac{1}{6}])$-action on $\pi_*(\tcf[\frac{1}{6}])$ is as expected.

\section{Hecke operators on rational topological cusp forms.}

When we speak of an ``action of Hecke operators'' on a spectrum $E$, we will always mean that the relevant Hecke operator $\tilde{T}_n$ ought to be a {\em homotopy class} of maps of spectra $E\rightarrow E$; we do not require Hecke operators to be defined ``on the nose,'' and we only require the usual Hecke relations $\tilde{T}_m\tilde{T}_n = \tilde{T}_n\tilde{T}_m$ and 
\begin{equation}\label{adams-hecke rel} \tilde{T}_{p^{r+2}}(x) = \tilde{T}_p\left(\tilde{T}_{p^{r+1}}(x)\right) - \frac{1}{p}\Psi^p\tilde{T}_{p^r}(x)\end{equation} to hold {\em up to homotopy}\footnote{The classical Hecke relation is $T_{p^{r+2}} = T_pT_{p^{r+1}} - p^{k-1}T_{p^r}$, which holds for the action of the Hecke operators on the ring of holomorphic modular forms. However, Baker points out in \cite{MR1037690} that his topological Hecke operators on elliptic homology instead satisfy the more general relation \eqref{adams-hecke rel}, where $\Psi^p$ here is the $p$th Adams operation. In the case of the holomorphic elliptic homology of the zero-sphere $S^0$, the action of $\Psi^p$ on $\ell_{2k}(S^0)\cong M_{k}$ is simply by multiplication by $p^k$, so \eqref{adams-hecke rel} reduces to the classical relation $T_{p^{r+2}} = T_pT_{p^{r+1}} - p^{k-1}T_{p^r}$ when the topological Hecke operators are evaluated on the zero-sphere. See \cite{top-hecke} for further discussion.}. Consequently, in the rest of this section, we will always be working in the stable homotopy category.

Getting an action of Hecke operators on $tcf$ is quite nontrivial: in \cite{MR1037690}, Baker constructs an action of Hecke operators on $TMF\left[\frac{1}{6}\right]$, and essentially the same method yields an action of Hecke operators on $tmf\left[\frac{1}{6}\right]$. Baker's method uses the complex-orientability of $TMF\left[\frac{1}{6}\right]$ in an essential way, and it does not yield Hecke operations on $TMF$ (or on $Tmf$ or $tmf$) without inverting the primes $2$ and $3$, since it is only after inverting $6$ that these spectra become complex-oriented.

Like $TMF$ and $Tmf$ and $tmf$, the spectrum $tcf$ is not complex-oriented; but unlike $TMF$ and $Tmf$ and $tmf$, inverting $6$ still does not make $tcf$ complex-oriented\footnote{This is simply because a spectrum cannot be complex-oriented without having a ring structure. Since $\pi_0(tcf)$ remains trivial after inverting any set of primes, it is not possible to make $tcf$ into a ring spectrum by inverting any set of primes.}. Some other idea is needed to build Hecke operators on $tcf$, even on $tcf\left[\frac{1}{6}\right]$.

In this note we take the path of least resistance: we simply rationalize, and then try to show that Baker's Hecke operators on rational $tmf$ restrict to operators on rational $tcf$. This is, however, straightforward: for each positive integer $n$, we have the solid arrows in the diagram of spectra whose rows are homotopy fiber sequences 
\begin{equation}\label{diag 095095}\xymatrix{
 F\smash H\mathbb{Q} \ar[r]\ar@{-->}[d] & tmf \smash H\mathbb{Q} \ar[r]\ar[d]_{\tilde{T}_n}  & ko \smash H\mathbb{Q} \\
 F\smash H\mathbb{Q} \ar[r] & tmf \smash H\mathbb{Q} \ar[r] & ko \smash H\mathbb{Q}
}\end{equation}
and the composite $F\smash H\mathbb{Q}\rightarrow ko\smash H\mathbb{Q}$ from the upper left corner to the lower right corner induces the zero map in $\pi_*$, since Baker's $\tilde{T}_n$ agrees with the classical Hecke operator on $T_n$ on $\pi_*(tmf\left[\frac{1}{6}\right])$. In rational spectra, a map that induces zero in $\pi_*$ is nulhomotopic, so there exists a map filling in the dotted line in diagram \eqref{diag 095095} and making it commute. The number of such maps is the order of the cokernel of the map \[[ F\smash H\mathbb{Q},\Sigma^{-1} tmf\smash H\mathbb{Q}] \rightarrow [F\smash H\mathbb{Q},\Sigma^{-1} ko\smash H\mathbb{Q}].\]
Since the rational homotopy groups of $\Sigma^{-1} ko$ are concentrated in odd degrees and since the only odd-degree rational homotopy group of $F$ is the copy of $\mathbb{Q}$ in rational $\pi_3(F)$ coming from the failure of $\pi_4(tmf)\rightarrow \pi_4(ko)$ to be rationally surjective, the dotted arrow in \eqref{diag 095095} is well-defined up to the question of how $T_n$ ought to act on $\pi_3(F)\otimes_{\mathbb{Z}}\mathbb{Q}$; a reasonable way to make this choice might be to use the action of $T_n$ on the quasimodular form $E_2$, as in \cite{MR3373249}. For the sake of this note, one can make whatever choice one wants, since we make no use of the Hecke action on $\pi_3(F)$; since taking the $4$-connective cover is a functor, each map $F\smash H\mathbb{Q}\rightarrow F\smash H\mathbb{Q}$ yields a map $\tcf \smash H\mathbb{Q}\rightarrow \tcf\smash H\mathbb{Q}$ with the same effect on homotopy in degrees $\geq 4$, yielding our Hecke action on $\tcf \smash H\mathbb{Q}$.

Consequently, for each positive integer $n$, we have a topological Hecke operator $\tilde{T}_n$ on $tcf$. It is a map of spectra $\tilde{T}_n: tcf\smash H\mathbb{Q} \rightarrow tcf\smash H\mathbb{Q}$, well-defined up to homotopy. By applying the base-change functor $-\smash_{H\mathbb{Q}} H\mathbb{C}$ we get a map of spectra $tcf\smash H\mathbb{C} \rightarrow tcf\smash H\mathbb{C}$, which by abuse of notation we will also call $\tilde{T}_n$, and which plays an important role in the next section.

\section{The Petersson product on complexified topological cusp forms.}

Given a connective spectrum $X$ and an integer $n$, let $X[0,n]$ denote $X$ with cells attached to kill all homotopy groups in degrees $>n$.
For each integer $k$, let $\tcf^{\wt k}_{\mathbb{C}}$ be the fiber of the map from $\left( \tcf\smash H\mathbb{C}\right)[0,2k]$ to $\left( \tcf\smash H\mathbb{C}\right)[0,2k-1]$. Since $\tcf\smash H\mathbb{C}$ is a rational spectrum, the fiber inclusion map $\tcf^{\wt k}_{\mathbb{C}}\rightarrow \left( \tcf\smash H\mathbb{C}\right)[0,2k]$ splits, and a degree argument shows that the splitting is unique. Hence
$\tcf^{\wt k}_{\mathbb{C}}$ is uniquely a wedge summand of $\tcf\smash H\mathbb{C}$, homotopy equivalent to the $2k$th suspension of the Eilenberg-Mac Lane spectrum of the $\mathbb{C}$-vector space $S_k\otimes_{\mathbb{Z}}\mathbb{C}$ of weight $k$ level $1$ classical cusp forms.

For each integer $k$, let 
\[\petersson_k: \Sigma^{-2k}\tcf^{\wt k}_{\mathbb{C}} \smash_{H\mathbb{R}} \Sigma^{-2k}\tcf^{\wt k}_{\mathbb{C}} \rightarrow H\mathbb{C} \] be the Eilenberg-Mac Lane spectrum functor $H$ applied to the Petersson product map $\left(S_k\otimes_{\mathbb{Z}}\mathbb{C}\right) \otimes_{\mathbb{R}} \left(S_k\otimes_{\mathbb{Z}}\mathbb{C}\right) \rightarrow \mathbb{C}$.

Now we are ready to define the Petersson product on topological cusp forms:
\begin{definition}\label{def of topological petersson bichar}
For each integer $j$, the {\em weight $j$ topological Petersson bicharacter} is the composite map of spectra 
\begin{align*} 
 \Sigma^{-2j}\tcf \smash \Sigma^{-2j}\tcf 
  &\rightarrow \left( \Sigma^{-2j}\tcf\smash H\mathbb{C}\right) \smash_{H\mathbb{R}} \left( \Sigma^{-2j}\tcf\smash H\mathbb{C}\right) \\
  &\rightarrow \Sigma^{-2j}\tcf^{\wt j}_{\mathbb{C}} \smash_{H\mathbb{R}} \Sigma^{-2j}\tcf^{\wt j}_{\mathbb{C}} \\
  &\stackrel{\petersson_j}{\longrightarrow} H\mathbb{C}.
\end{align*}

 Consequently, given an unpointed topological space\footnote{We cannot make the same construction for an arbitrary spectrum $X$, since we are required to have a diagonal map on $X$.} $X$, an integer or half-integer $k$, and a pair of elements $f,g\in [\Sigma^{\infty}X_+,\Sigma^{-2k}\tcf]\cong \tcf^{-2k}(X)$, the composite map
\begin{align*}
 \Sigma^{\infty} X_+ 
  &\stackrel{\Sigma^{\infty}\Delta_+}{\longrightarrow}\Sigma^{\infty} (X\times X)_+ \\
  &\stackrel{\cong}{\longrightarrow} \Sigma^{\infty}X_+\smash \Sigma^{\infty}X_+ \\
  &\stackrel{f\smash g}{\longrightarrow} \Sigma^{-2k}\tcf\smash \Sigma^{-2k} \tcf \\
  &\stackrel{}{\longrightarrow} \Sigma^{-2k}\tcf\smash_{H\mathbb{R}} \Sigma^{-2k} \tcf \\
  &\stackrel{\Sigma^{4(j-k)}\petersson_j}{\longrightarrow} \Sigma^{2(j-k)}H\mathbb{C}
\end{align*}
is an element of $\tilde{H}^{4(j-k)}(X_+; \mathbb{C})\cong H^{4(j-k)}(X; \mathbb{C})$. We define $\langle f,g\rangle_j$,
the {\em weight $j$ topological Petersson product of $f$ and $g$}, to be  
that element of $H^{4(j-k)}(X;\mathbb{C})$.
\end{definition}
Note that the topological Petersson products are {\em not} stable invariants. In particular, suspending $X$ tends to cause topological Petersson products not supported on the basepoint to become zero.

%

%

\begin{examples}\label{product examples}\leavevmode
\begin{itemize}
\item
When $X = \pt$, an element $f$ of $\left[\Sigma^{\infty}X_+,\Sigma^{-2k}\tcf\left[\frac{1}{6}\right]\right]\cong \pi_{2k}\left(\tcf\left[\frac{1}{6}\right]\right)$ is given by a weight $k$ level $1$ classical cusp form.
Since $\pi_0(\Sigma^{-2k}\tcf^{\wt j}_{\mathbb{C}})$ vanishes unless $j=k$,
the composite 
\begin{align*}
 \Sigma^{\infty} X_+
  &\stackrel{\cong}{\longrightarrow} \Sigma^{\infty} S^0 \\
  &\stackrel{\cong}{\longrightarrow} \Sigma^{\infty} S^0 \smash \Sigma^{\infty} S^0  \\
  &\stackrel{f\smash g}{\longrightarrow} \Sigma^{-2k}\tcf\smash \Sigma^{-2k} \tcf \\
  &\rightarrow \Sigma^{-2k}\tcf^{\wt j}_{\mathbb{C}} \smash_{H\mathbb{R}} \Sigma^{-2k}\tcf^{\wt j}_{\mathbb{C}} 
\end{align*}
is nulhomotopic unless $j=k$.
Consequently $\langle f,g\rangle_j =0$ unless $j=k$. When $j=k$, the topological Petersson product is nonzero:
it is immediate from Definition \ref{def of topological petersson bichar} that $\langle f,g\rangle_k = \langle f,g\rangle \in \mathbb{C}\cong H^0(\pt;\mathbb{C})$, the classical Petersson product of the cusp forms $f$ and $g$.
\item More generally, when $X$ is a finite discrete space with $n$ points, then 
$\tcf^{2k}(X)$ splits as a direct sum of $n$ copies of $\tcf^{2k}(\pt)$, and the weight $k$ topological Petersson product on each summand of $\tcf^{2k}(X)\left[\frac{1}{6}\right]$ is simply the classical Petersson product, while the weight $j$ topological Petersson product is zero for $j\neq k$.
\item
When $X=S^{n}$ for $n>0$, it is standard that the composite map of spectra
\begin{align*}
 \Sigma^{\infty} S^{n} \vee \Sigma^{\infty} S^{0} 
  &\stackrel{\cong}{\longrightarrow} \Sigma^{\infty} S^{n}_+ \\
  &\stackrel{\Sigma^{\infty}\Delta_+}{\longrightarrow}\Sigma^{\infty} (S^{n}\times S^{n})_+ \\
  &\stackrel{\simeq}{\longrightarrow} \Sigma^{\infty}S^{2n} \vee \Sigma^{\infty}S^{n} \vee \Sigma^{\infty}S^{n} \vee \Sigma^{\infty}S^{0}   
\end{align*}
sends the summand $\Sigma^{\infty} S^{n}$ in its domain via the categorical diagonal map to the summands $\Sigma^{\infty}S^{n} \vee \Sigma^{\infty}S^{n}$ in the codomain, and sends the summand $\Sigma^{\infty}S^0$ in its domain via the identity map to the summand $\Sigma^{\infty}S^0$ in its codomain.
Consequently, given elements $f,g\in \left[\Sigma^{\infty}S^{n}_+,\Sigma^{-2k}\tcf\left[\frac{1}{6}\right]\right]$, 
we decompose $f$ into a weight $k$ cusp form $f_0 \in \left[\Sigma^{\infty}S^0,\Sigma^{-2k}\tcf\left[\frac{1}{6}\right]\right]$ and a weight $k+\frac{n}{2}$ cusp form $f_{n/2} \in \left[\Sigma^{\infty}S^{n},\Sigma^{-2k}\tcf\left[\frac{1}{6}\right]\right]$, and similarly we decompose $g$ into a weight $k$ cusp form $g_0\in \left[\Sigma^{\infty}S^0,\Sigma^{-2k}\tcf\left[\frac{1}{6}\right]\right]$ and a weight $k+\frac{n}{2}$ cusp form $g_{n/2} \in \left[\Sigma^{\infty}S^{n},\Sigma^{-2k}\tcf\left[\frac{1}{6}\right]\right]$, so that 
the composite map
\begin{align*}
 \Sigma^{\infty} S^{n}_+ 
  &\longrightarrow \Sigma^{\infty} S^{n}_+ \smash \Sigma^{\infty} S^{n}_+ \\
  &\stackrel{f\smash g}{\longrightarrow} \Sigma^{-2k}\tcf\left[\frac{1}{6}\right]\smash \Sigma^{-2k} \tcf\left[\frac{1}{6}\right] \\
  &\rightarrow \Sigma^{-2k}\tcf^{\wt j}_{\mathbb{C}} \smash_{H\mathbb{R}} \Sigma^{-2k}\tcf^{\wt j}_{\mathbb{C}} \\
  &\rightarrow \Sigma^{4(j-k)}H\mathbb{C} 
\end{align*}
is nulhomotopic unless $j=k$.
To be clear: the composite map
\begin{align}
\label{comp map 1} \Sigma^{\infty} S^{2n}\vee 
 \Sigma^{\infty} S^{n}\vee 
 \Sigma^{\infty} S^{n}\vee 
 \Sigma^{\infty} S^{0} 
 &\stackrel{\simeq}{\longrightarrow} \Sigma^{\infty} (S^n\times S^n)_+ \\
\label{comp map 2} &\stackrel{\simeq}{\longrightarrow} \Sigma^{\infty} S^n_+\smash \Sigma^{\infty} S^n_+ \\
\label{comp map 3} &\stackrel{f\smash g}{\longrightarrow} \Sigma^{-2k}tcf^{\wt j}_{\mathbb{C}} \smash_{H\mathbb{R}}\Sigma^{-2k} tcf^{\wt j}_{\mathbb{C}}\\
 & \rightarrow \Sigma^{4(j-k)}H\mathbb{C}\end{align}
has the following effect on each wedge summands in its domain:
\begin{itemize}
\item On $\Sigma^{\infty} S^{2n}$, the composite map \[\Sigma^{\infty} S^{2n} \rightarrow \Sigma^{-2k}tcf^{\wt j}_{\mathbb{C}} \smash_{H\mathbb{R}}\Sigma^{-2k} tcf^{\wt j}_{\mathbb{C}}\rightarrow \Sigma^{4(j-k)}H\mathbb{C}\] is $\langle f_{n/2},g_{n/2}\rangle_{j}$, a classical Petersson product of weight $k+n/2$ cusp forms, hence certainly capable of being nonzero if (and only if) $j = k+ n/2$.
\item On one summand $\Sigma^{\infty}S^n$, the composite map \[\Sigma^{\infty} S^{n} \rightarrow \Sigma^{-2k}tcf^{\wt j}_{\mathbb{C}} \smash_{H\mathbb{R}}\Sigma^{-2k} tcf^{\wt j}_{\mathbb{C}}\rightarrow \Sigma^{4(j-k)}H\mathbb{C}\] is $\langle f_{0},g_{n/2}\rangle_{j}$, a classical Petersson product of cusp forms of distinct weights, hence is zero.
\item On the other summand $\Sigma^{\infty}S^n$, the composite map \[\Sigma^{\infty} S^{n} \rightarrow \Sigma^{-2k}tcf^{\wt j}_{\mathbb{C}} \smash_{H\mathbb{R}}\Sigma^{-2k} tcf^{\wt j}_{\mathbb{C}}\rightarrow \Sigma^{4(j-k)}H\mathbb{C}\] is $\langle f_{n/2},g_{0}\rangle_{j}$, again a classical Petersson product of cusp forms of distinct weights, hence again zero.
\item On the summand $\Sigma^{\infty}S^0$, the composite map \[\Sigma^{\infty} S^{0} \rightarrow \Sigma^{-2k}tcf^{\wt j}_{\mathbb{C}} \smash_{H\mathbb{R}}\Sigma^{-2k} tcf^{\wt j}_{\mathbb{C}}\rightarrow \Sigma^{4(j-k)}H\mathbb{C}\] is $\langle f_{0},g_{0}\rangle_{j}$, a classical Petersson product of weight $k$ cusp forms, hence capable of being nonzero if (and only if) $j = k$. 
\end{itemize}
However, the topological Petersson product of $f$ and $g$ is the composite of \eqref{comp map 2} and \eqref{comp map 3}, {\em precomposed with the diagonal map} $\Delta: \Sigma^{\infty} S^{n}_+\rightarrow \Sigma^{\infty} (S^n\times S^n)_+$. While the composite of \eqref{comp map 1} through \eqref{comp map 3} is non-nulhomotopic on the top-dimensional summand $\Sigma^{\infty}S^{2n}$, precomposing with $\Delta$ misses that top-dimensional summand entirely. (Of course this follows immediately from the triviality of $\pi_n(S^{2n})$. Recall that $n>0$ throughout this example.)

Consequently, on the $n$-sphere for $n>2$, the weight $j$ topological Petersson product of $f$ and $g$ is trivial unless $j=k$. If $j=k$, then the topological Petersson product of $f$ and $g$ is supported entirely on the basepoint of $S^n$, where it agrees with the classical Petersson product of $f_0$ and $g_0$. On the $n$-sphere for $n>0$, the topological Petersson product is completely insensitive to $f_{n/2}$ and $g_{n/2}$. In particular, the topological Petersson product is degenerate on $\tcf^*(S^{n})\otimes_{\mathbb{Z}}\mathbb{C}$.
\item The example of the sphere $S^{n}$ for $n>0$ may give the reader the impression that the topological Petersson product on $\tcf^*(X)\otimes_{\mathbb{Z}}\mathbb{C}$ is simply given by the classical Petersson product on the cusp forms supported on the basepoint of $X_+$.
This impression is not correct, as the following example demonstrates: let $X = \mathbb{C}P^2$. The composite map
\begin{align*} 
 \Sigma^{\infty} S^0 \vee \Sigma^{\infty} \mathbb{C}P^2 
  &\stackrel{\cong}{\longrightarrow} \Sigma^{\infty} \mathbb{C}P^2_+  \\
  &\rightarrow \Sigma^{\infty}(\mathbb{C}P^2 \times \mathbb{C}P^2)_+ \\
  &\stackrel{\cong}{\longrightarrow} \Sigma^{\infty}\mathbb{C}P^2_+\smash \Sigma^{\infty}\mathbb{C}P^2_+ \\
  &\stackrel{\cong}{\longrightarrow} \left(\Sigma^{\infty}\mathbb{C}P^2 \smash\Sigma^{\infty}\mathbb{C}P^2\right) \vee \Sigma^{\infty}\mathbb{C}P^2 \vee\Sigma^{\infty}\mathbb{C}P^2 \vee \Sigma^{\infty}S^0
\end{align*}
is 
non-nulhomotopic: it is nulhomotopic on the $2$-cell $S^2 \simeq \mathbb{C}P^1\subseteq \mathbb{C}P^2$, but the top cell of $\mathbb{C}P^2$ is mapped by a rational equivalence to the $2$-skeleton of $\mathbb{C}P^2\smash \mathbb{C}P^2$, since the cup square of a generator of $H^2(\mathbb{C}P^2;\mathbb{C})$ is a generator of $H^4(\mathbb{C}P^2;\mathbb{C})$.

Consequently, given elements $f,g\in \left[\Sigma^{\infty}\mathbb{C}P^2_+,\Sigma^{-2k}\tcf\left[\frac{1}{6}\right]\right]]$, we decompose $f$ into a weight $k$ cusp form $f_0 \in \left[\Sigma^{\infty}S^0,\Sigma^{-2k}\tcf\left[\frac{1}{6}\right]\right]$ and an element $f^{\prime}$ of $\left[\Sigma^{\infty}\mathbb{C}P^2,\Sigma^{-2k}\tcf\left[\frac{1}{6}\right]\right]$, and similarly we decompose $g$ into a weight $k$ cusp form $g_0 \in \left[\Sigma^{\infty}S^0,\Sigma^{-2k}\tcf\left[\frac{1}{6}\right]\right]$ and an element $g^{\prime}$ of $\left[\Sigma^{\infty}\mathbb{C}P^2,\Sigma^{-2k}\tcf\left[\frac{1}{6}\right]\right]$.
For the same reasons as in the previous examples, $\langle f_0,g_0\rangle_k = \langle f,g\rangle_k$, and $\langle f_0,g_0\rangle_j = 0$ if $j\neq k$. 

The Atiyah-Hirzebruch spectral sequence
\begin{align*}
 E_2^{s,t} \cong H^s\left(\mathbb{C}P^2; \tcf^t \otimes_{\mathbb{Z}}\mathbb{C}\right) 
  &\Rightarrow \tcf^{s+t}(\mathbb{C}P^2)\otimes_{\mathbb{Z}}\mathbb{C} \\
 d_r: E_r^{s,t} &\rightarrow E_r^{s+r,t-r+1} \end{align*}
collapses at $E_2$; one way to see this is by Dold's theorem that the shortest nonzero Atiyah-Hirzebruch differential is torsion-valued, from \cite{MR0198464} (see \cite{MR1193149} for a more easily-obtained English-language reference). 
This yields a splitting of $\mathbb{C}$-vector spaces
\begin{align*} 
 \tcf^{-2k}(\mathbb{C}P^2)\otimes_{\mathbb{Q}}\mathbb{C} 
  &\cong \left( H^0(\mathbb{C}P^2;\mathbb{C})\otimes_{\mathbb{Q}} S_k\right) \\ 
  &\oplus\left( H^2(\mathbb{C}P^2;\mathbb{C})\otimes_{\mathbb{Q}} S_{k+1}\right) \\
  &\oplus \left( H^4(\mathbb{C}P^2;\mathbb{C})\otimes_{\mathbb{Q}} S_{k+2}\right).\end{align*}
Consequently we can decompose the image of $f$ in $\tcf^{-2k}(\mathbb{C}P^2)\otimes_{\mathbb{Z}}\mathbb{C}$ into a triple $(f_0,f_2,f_4)$, where $f$ is a weight $k$ cusp form and $f_2$ is a weight $k+1$ cusp form and $f_4$ is a weight $k+2$ cusp form. This refines the above decomposition of $f$ into $f_0$ and $f^{\prime} = (f_2,f_4)$. We decompose $g$ into $(g_0,g_2,g_4)$ similarly.


The composite 
\begin{align*}
 \Sigma^{\infty} S^2
  &\stackrel{\cong}{\longrightarrow} \Sigma^{\infty} \mathbb{C}P^1\\
  &\stackrel{}{\longrightarrow} \Sigma^{\infty} \mathbb{C}P^2_+\\
  &\stackrel{}{\longrightarrow} \Sigma^{\infty} \mathbb{C}P^2_+ \smash \Sigma^{\infty} \mathbb{C}P^2_+  \\
  &\stackrel{f_2\smash g_2}{\longrightarrow} \Sigma^{-2k}\tcf\smash \Sigma^{-2k} \tcf \\
  &\rightarrow \Sigma^{-2k}\tcf^{\wt j}_{\mathbb{C}} \smash \Sigma^{-2k}\tcf^{\wt j}_{\mathbb{C}} \end{align*}
is nulhomotopic for all $j$, so the component of $\langle f,g\rangle_j$ on the $2$-cell in $\mathbb{C}P^2_+$ is trivial for all $j$.

The $4$-cell in $\mathbb{C}P^2_+$ is the interesting part: by the above cell-by-cell analysis of the homotopy class of the diagonal map on $\mathbb{C}P^2_+$, the component of $\langle f,g\rangle_j$ on the $4$-cell in $\mathbb{C}P^2_+$ is $\langle f_2,g_2\rangle_{j+1}$.
Finally, we have 
\begin{align*}
  \langle f,g\rangle_k &= \langle f_0,g_0\rangle \in \mathbb{C}\cong H^0(\mathbb{C}P^2; \mathbb{C}) \\
  \langle f,g\rangle_{k+1} &= \langle f_2,g_2\rangle \in \mathbb{C}\cong H^4(\mathbb{C}P^2; \mathbb{C}) \\
  \langle f,g\rangle_{j} &= 0 \mbox{\ if\ } j\neq k,k+1.
\end{align*}
\end{itemize}
\end{examples}

The above examples show that, since $\tcf^{\wt j}_{\mathbb{C}}$ is a rational spectrum, the topological Petersson product on $\tcf^*(X)\left[\frac{1}{6}\right]$ is determined on the $n$-skeleton $\tcf^*(X^n)\left[\frac{1}{6}\right]$ by the images of the {\em rational} cells in $X^n\smash H\mathbb{Q}$ under the rationalized diagonal map $X \rightarrow X \times X$, up to elements of higher Atiyah-Hirzebruch filtration\footnote{Recall that the {\em Atiyah-Hirzebruch filtration on $E^*(X)$,} for a generalized cohomology theory $E^*$ and a CW-complex $X$, is the decreasing filtration of $E^*(X)$ whose $n$th layer $F^n$ is the set of elements of $E^*(X)$ whose image under the map $E^*(X) \rightarrow E^*(X/X^{n-1})$ is zero. Here $X/X^{n-1}$ is $X$ with its $(n-1)$-skeleton $X^{n-1}$ collapsed (``pinched'') down to a point. An element of $E^*(X)$ has {\em Atiyah-Hirzebruch degree $n$} if it has Atiyah-Hirzebruch filtration $n$ but not Atiyah-Hirzebruch filtration $n-1$.}; that is, on the associated graded of the Atiyah-Hirzebruch filtration on $\tcf^*(X)\otimes_{\mathbb{Z}}\mathbb{C}$, the topological Petersson product is determined by the cup product on $H^*(X;\mathbb{C})$ and the classical Petersson product.
 Consequently, we have:
\begin{prop}\label{main prop}
Let $X$ be an unpointed finite CW-complex. 
Let $k$ be an integer or half-integer, let $f,g\in \tcf^{-2k}(X)\left[\frac{1}{6}\right]$, and let $j$ be an integer.
Then $\langle f,g\rangle_j$ is the element of $H^{4(j-k)}(X;\mathbb{C})\cong \hom_{\mathbb{Q}}(H_{4(j-k)}(X;\mathbb{Q}),\mathbb{C})$ given, up to higher Atiyah-Hirzebruch filtration, by composing the rational homology coproduct map
\begin{align*} H_{4(j-k)}(X;\mathbb{Q}) &\rightarrow \coprod_n H_{n}(X;\mathbb{Q})\otimes_{\mathbb{Q}} H_{4(j-k)-n}(X;\mathbb{Q})\end{align*}
with the summand projection 
\begin{align*} \coprod_n H_{n}(X;\mathbb{Q})\otimes_{\mathbb{Q}} H_{4(j-k)-n}(X;\mathbb{Q}) &\rightarrow H_{2(j-k)}(X; \mathbb{Q})\otimes_{\mathbb{Q}} H_{2(j-k)}(X; \mathbb{Q})
\end{align*}
followed by the map 
\begin{align}
\label{map 340943} H_{2(j-k)}(X; \mathbb{Q})\otimes_{\mathbb{Q}} H_{2(j-k)}(X; \mathbb{Q}) &\rightarrow \left( S_{j} \otimes_{\mathbb{Z}} \mathbb{Q} \right)\otimes_{\mathbb{Q}} \left( S_{j} \otimes_{\mathbb{Z}} \mathbb{Q} \right)
\end{align}
given by the chain of isomorphisms
\begin{align*}
 \tcf^{-2k}(X)\otimes_{\mathbb{Z}}\mathbb{C} 
  \cong \coprod_n H^{n}(X; \tcf^{-n-2k}\otimes_{\mathbb{Z}}\mathbb{C}) \\
  \cong \coprod_n \hom_{\mathbb{Q}}(H_{n}(X;\mathbb{Q}), \tcf^{-n-2k} \otimes_{\mathbb{Z}} \mathbb{C}) \\
  \cong \coprod_n \hom_{\mathbb{Q}}(H_{n}(X;\mathbb{Q}), S_{\frac{n}{2}+k} \otimes_{\mathbb{Z}} \mathbb{C}),
\end{align*}
and finally postcomposing \eqref{map 340943} with the classical Petersson product map 
\begin{align*}
\left( S_{j} \otimes_{\mathbb{Z}} \mathbb{Q} \right)\otimes_{\mathbb{Q}} \left( S_{j} \otimes_{\mathbb{Z}} \mathbb{Q} \right)
 &\rightarrow \mathbb{C}.
\end{align*}
\end{prop}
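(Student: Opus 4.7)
The plan is to combine three ingredients: collapse of the Atiyah--Hirzebruch spectral sequence for $\tcf^*(X)\otimes_{\mathbb{Z}}\mathbb{C}$, a rational-homotopic description of the diagonal $\Sigma^{\infty}\Delta_+$, and tracking which weights survive the projection onto $\tcf^{\wt j}_{\mathbb{C}}$. Since $\tcf\smash H\mathbb{C}$ is a rational spectrum, it splits as a wedge of shifted Eilenberg--Mac Lane spectra (one summand for each weight, via the $\tcf^{\wt k}_{\mathbb{C}}$ of the preceding section), so the AHSS for $\tcf^*(X)\otimes_{\mathbb{Z}}\mathbb{C}$ collapses and its associated graded identifies, via universal coefficients, with $\bigoplus_n \hom_{\mathbb{Q}}(H_n(X;\mathbb{Q}),S_{n/2+k}\otimes_{\mathbb{Z}}\mathbb{C})$. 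This already produces the chain of isomorphisms appearing at the end of the statement, and identifies the ``degree'' of a class in $\tcf^{-2k}(X)\otimes\mathbb{C}$ with its Atiyah--Hirzebruch filtration index.

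Next I would unwind the composite in Definition \ref{def of topological petersson bichar}. After base-change to $H\mathbb{C}$, both $\Sigma^{\infty}X_+\smash H\mathbb{C}$ and $\Sigma^{\infty}(X\times X)_+\smash H\mathbb{C}$ split as wedges of shifted $H\mathbb{C}$'s indexed by a rational basis of $H_*(X;\mathbb{Q})$ and of its tensor square; up to higher Atiyah--Hirzebruch filtration, the map induced by $\Sigma^{\infty}\Delta_+$ is the coalgebra coproduct on rational homology dual to the cup product on $H^*(X;\mathbb{Q})$. This is the formal counterpart of the cell-by-cell diagonal analysis already carried out in Examples \ref{product examples}, and justifies replacing the topological diagonal in the definition of $\langle f,g\rangle_j$ by the rational homology coproduct $H_{4(j-k)}(X;\mathbb{Q})\to\bigoplus_n H_n(X;\mathbb{Q})\otimes_{\mathbb{Q}} H_{4(j-k)-n}(X;\mathbb{Q})$. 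Since $\petersson_j$ is nulhomotopic off $\Sigma^{-2j}\tcf^{\wt j}_{\mathbb{C}}\smash_{H\mathbb{R}}\Sigma^{-2j}\tcf^{\wt j}_{\mathbb{C}}$, and since in the AHSS splitting the weight-$j$ cusp forms live precisely in the summand $\hom_{\mathbb{Q}}(H_{2(j-k)}(X;\mathbb{Q}),S_j\otimes_{\mathbb{Z}}\mathbb{C})$, the only surviving piece of the coproduct is the one on $H_{2(j-k)}(X;\mathbb{Q})\otimes H_{2(j-k)}(X;\mathbb{Q})$; composing with the classical Petersson product then yields exactly the right-hand side of the proposition.

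The main obstacle is justifying the qualifier ``up to higher Atiyah--Hirzebruch filtration'' rigorously: the wedge splitting of $\Sigma^{\infty}X_+\smash H\mathbb{C}$ into shifted copies of $H\mathbb{C}$ is only canonical on the associated graded of the cellular filtration, so the identification of the smashed diagonal with the cup-product coproduct can in principle be disturbed by maps of strictly higher filtration that are invisible to the $E_2$-page of the AHSS. Showing that such corrections cannot perturb the formula modulo higher filtration is the main technical step; one does it by running the whole argument inside the cellular-filtration-respecting category of $H\mathbb{C}$-module spectra, where all the splittings are strict, and then transferring the conclusion back through the collapse of the AHSS. Once this is in place, everything else reduces to the bookkeeping and the cell-by-cell computations already performed in Examples \ref{product examples}.
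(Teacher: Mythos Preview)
Your proposal is correct and follows essentially the same approach as the paper. The paper does not give a formal proof environment for this proposition; instead, it records the proposition as a direct consequence of the cell-by-cell analysis in Examples~\ref{product examples}, together with the observation (made in the paragraph immediately preceding the statement) that rationality of $\tcf^{\wt j}_{\mathbb{C}}$ forces the product on the associated graded of the Atiyah--Hirzebruch filtration to be determined by the rationalized diagonal (i.e., the cup product) and the classical Petersson product. Your three ingredients---AHSS collapse over a rational spectrum, identification of $\Sigma^{\infty}\Delta_+$ with the homology coproduct, and weight-filtering via the projection to $\tcf^{\wt j}_{\mathbb{C}}$---are exactly what the paper is invoking, only spelled out more carefully; your additional discussion of why the identification holds modulo higher filtration is more than the paper itself supplies.
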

We can rephrase Proposition \ref{main prop} as the following method for calculating $\langle -,-\rangle_j$, not on $\tcf^*(X)\otimes_{\mathbb{Z}}\mathbb{C}$, but only on {\em the associated graded of the Atiyah-Hirzebruch filtration on} $\tcf^*(X)\otimes_{\mathbb{Z}}\mathbb{C}$. Given elements $f,g\in \tcf^{-2k}(X)\otimes_{\mathbb{Z}}\mathbb{C}$, we use the isomorphism $\tcf^*(X)\otimes_{\mathbb{Z}}\mathbb{C}\cong \tcf^*(S^0)\otimes_{\mathbb{Z}}H^*(X; \mathbb{C})$ to express $f$ as a sum $\sum_h f_h \otimes x_h$ and to express $g$ as a sum $\sum_i g_i\otimes y_i$, with each $f_h$ and each $g_i$ a classical cusp form, and with each $x_h$ and $y_i$ an element of $H^*(X;\mathbb{C})$. Since we are only calculating $\langle -,-\rangle_j$ on the associated graded of the Atiyah-Hirzebruch filtration, we can assume without loss of generality that $f$ and $g$ are each homogeneous with respect to Atiyah-Hirzebruch filtration, i.e., that the integer $\left| x_h\right|$ is independent of $h$, and the integer $\left| y_i\right|$ is independent of $i$.
Then Proposition \ref{main prop} yields the formula
\begin{align}
\label{product formula 200}
 \langle f,g\rangle_j &\equiv \sum_{(h,i): \left|x_h\right| = \left|y_i\right| = 2(j-k)} \langle f_h,g_i\rangle \cdot (x_h\cup y_i)
\end{align}
modulo terms of greater Atiyah-Hirzebruch filtration. 
The form $\langle -,-\rangle$ on the right-hand side of \eqref{product formula 200} is the classical Petersson product of cusp forms, while $x_h\cup y_i$ is the usual cup product in $H^*(X;\mathbb{C})$. 

\begin{corollary}\label{degeneracy of the petersson product}
Let $X$ be a finite CW-complex such that $H^m(X;\mathbb{Q})$ is nonzero for some positive integer $m$.
Then there exists some integer $n$ such that, for every integer $j$, the topological Petersson product of weight $j$ is degenerate on $\tcf^n(X)\otimes_{\mathbb{Z}}\mathbb{C}$.
\end{corollary}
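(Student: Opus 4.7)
The plan is to exhibit a single nonzero $f \in \tcf^n(X) \otimes_{\mathbb{Z}} \mathbb{C}$ that pairs to zero with every $g$ under $\langle -, - \rangle_j$ for every weight $j$ simultaneously. The guiding idea is to place $f$ at the very top of the Atiyah-Hirzebruch filtration on $X$: then every cup-product contribution to $\langle f, g\rangle_j$ produced by formula \eqref{product formula 200} is forced to land in cohomological degree strictly greater than the top degree of $H^*(X;\mathbb{C})$, and hence must vanish.

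Concretely, let $d \geq 1$ be the largest integer with $H^d(X; \mathbb{Q}) \neq 0$. Pick any nonzero weight-$w$ cusp form $\phi \in S_w \otimes \mathbb{C}$ with $S_w \neq 0$ (for example $\phi = \Delta$, $w = 12$), and a nonzero class $x \in H^d(X; \mathbb{C})$. Set $k = w - d/2$ and $n = -2k = d - 2w$. Under the Atiyah-Hirzebruch identification from Proposition \ref{main prop},
\[
 \tcf^n(X) \otimes_{\mathbb{Z}} \mathbb{C} \;\cong\; \bigoplus_m H^m(X; \mathbb{C}) \otimes_{\mathbb{Q}} \left(S_{(m-n)/2} \otimes_{\mathbb{Z}} \mathbb{Q}\right),
\]
the $m = d$ summand is $H^d(X; \mathbb{C}) \otimes_{\mathbb{Q}} (S_w \otimes \mathbb{Q})$, and I take $f := x \otimes \phi$, which is nonzero and of Atiyah-Hirzebruch filtration exactly $d$.

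Given any $g$ and $j$, reduce by bilinearity to $g = \sum_i g_i \otimes y_i$ homogeneous of Atiyah-Hirzebruch filtration $q$ (each $|y_i| = q$). Formula \eqref{product formula 200} then gives, modulo higher Atiyah-Hirzebruch filtration,
\[
 \langle f, g \rangle_j \;\equiv\; \sum_{i\,:\,|x| = |y_i| = 2(j-k)} \langle \phi, g_i \rangle \cdot (x \cup y_i).
\]
Since $|x| = d$, the constraint $|x| = 2(j-k)$ forces $2(j-k) = d$, whence $|y_i| = d$ and $x \cup y_i \in H^{2d}(X; \mathbb{C}) = 0$ because $2d > d$ and $d$ is the top nonzero rational cohomological degree. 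So the associated-graded leading term always vanishes.

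The only delicate step---and the main obstacle I expect---is upgrading this associated-graded vanishing to vanishing on the nose. For this I would use that the target $H^{4(j-k)}(X;\mathbb{C})$ carries the trivial skeletal filtration of ordinary cohomology, $F^p H^r(X;\mathbb{C}) = H^r(X;\mathbb{C})$ for $p \le r$ and $0$ for $p > r$. Since the pairing sends $F^d \otimes F^q$ into $F^{d+q} H^{4(j-k)}(X;\mathbb{C})$, a nonzero value of $\langle f,g\rangle_j$ would require both $d + q \leq 4(j-k)$ and $4(j-k) \leq d$, forcing $q = 0$ and $4(j-k) = d$. In that sole remaining case $\gr^d H^d(X;\mathbb{C}) = H^d(X;\mathbb{C})$ itself, so the vanishing of the leading term gives $\langle f, g\rangle_j = 0$ outright. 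Hence $f$ lies in the left kernel of $\langle -, -\rangle_j$ for every integer $j$, which establishes degeneracy of the weight-$j$ topological Petersson product on $\tcf^n(X) \otimes_{\mathbb{Z}} \mathbb{C}$ with $n = d - 2w$.
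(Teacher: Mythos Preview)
Your proof is correct and follows essentially the same strategy as the paper's: both place the test element $f$ at the top Atiyah--Hirzebruch filtration $d$ (the maximal degree with $H^d(X;\mathbb{Q})\neq 0$) by tensoring a nonzero cusp form with a nonzero class $x\in H^d(X;\mathbb{C})$, and then observe via formula \eqref{product formula 200} that any nontrivial contribution would require a cup product $x\cup y$ with $|y|=d$, which lands in $H^{2d}(X;\mathbb{C})=0$. The only real difference is in the ``upgrade'' from associated-graded vanishing to genuine vanishing: the paper dispatches this in one line by noting that $f$ already sits in the maximal filtration, so there \emph{are} no higher-filtration corrections, while you argue more explicitly via the multiplicative compatibility $F^d\otimes F^q\to F^{d+q}$ of the pairing with the skeletal filtration and the bound $4(j-k)\le d$ on the target. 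Your route is slightly longer but makes the filtration bookkeeping more transparent; the paper's is terser but relies on the reader seeing immediately why top filtration kills the correction terms.
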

\begin{proof}
Let $d$ be the greatest integer $m$ such that $H^m(X;\mathbb{Q})$ is nonzero. Then $d>0$, so for degree reasons, there exists no element $y\in H^d(X;\mathbb{Q})$ such that $x\cup y \neq 0$. Let $f$ be any nonzero classical cusp form, of any weight $k$. Then 
\[ 0\neq f\otimes x \in \tcf^{-2k}(S^0) \otimes_{\mathbb{Z}} H^d(X;\mathbb{Q})\otimes_{\mathbb{Z}}\mathbb{C} \cong 
\tcf^{d-2k}(X)\otimes_{\mathbb{Z}}\mathbb{C},\]
and by formula \eqref{product formula 200} and the triviality of $x\cup y$ for all $y\in H^d(X;\mathbb{Q})$, we have that $\langle f,g\rangle_j$ is zero modulo terms of greater Atiyah-Hirzebruch filtration, for any $g$. But since $d$ is the highest degree in which $H^*(X;\mathbb{C})$ vanishes, $f$ is already in the highest possible Atiyah-Hirzebruch filtration. So $\langle f,g\rangle_j$ is zero for all $g$. So $\langle -,-\rangle_j$ is degenerate on $\tcf^{d-2k}(X)\otimes_{\mathbb{Z}}\mathbb{C}$.
\end{proof}

A consequence of formula \eqref{product formula 200} is that the weight $j$ topological Petersson product $\langle -,-\rangle_j$ vanishes on the weight $k$ cusp forms supported on cells of dimension $>2(j-k)$. In light of that consequence, we make the following definition of the topological Petersson product {\em simpliciter}, rather than the topological Petersson product of a particular weight:
\begin{definition}\label{def of topological petersson product}
Let $X$ be a finite CW-complex.
By the {\em topological Petersson product} on $\tcf^*(X)\otimes_{\mathbb{Z}}\mathbb{C}$ we mean the $\mathbb{R}$-bilinear form 
\begin{align*}\left( \tcf^*(X)\otimes_{\mathbb{Z}}\mathbb{C} \right) \otimes_{\mathbb{R}}\left( \tcf^*(X)\otimes_{\mathbb{Z}}\mathbb{C} \right) &\rightarrow \mathbb{C}\end{align*}
given on the elements of $\tcf^{i}(X)\otimes_{\mathbb{Z}}\mathbb{C}$ of Atiyah-Hirzebruch degree $j$ by the weight $(j-i)/2$ topological Petersson product $\langle -,-\rangle_{(j-i)/2}$. (If $j-i$ is odd, then this product is defined to be zero.) We write $\langle -,-\rangle_{X}$ for the topological Petersson product on the space $X$.
\end{definition}
\begin{example}
In the case of the complex projective plane, collapse of the Atiyah-Hirzebruch spectral sequence gives us that the complexified $tcf$-cohomology $\tcf^{-*}(\mathbb{C}P^2)\otimes_{\mathbb{Z}}\mathbb{C}$ is isomorphic to the direct sum of:
\begin{itemize}
\item a copy of $S_*\otimes_{\mathbb{Z}}\mathbb{C}$ supported on the $0$-cell of $\mathbb{C}P^2$,
\item a copy of $S_{2+*}\otimes_{\mathbb{Z}}\mathbb{C}$ supported on the $2$-cell of $\mathbb{C}P^2$, 
\item and a copy of $S_{4+*}\otimes_{\mathbb{Z}}\mathbb{C}$ supported on the $4$-cell of $\mathbb{C}P^2$.
\end{itemize}
As in Examples \ref{product examples}, an element $f$ of $\tcf^{-2k}(\mathbb{C}P^2)\otimes_{\mathbb{Z}}\mathbb{C}$ is given by a triple $(f_k,f_{k+1},f_{k+2})$, where $f_k,f_{k+1},f_{k+2}$ are weight $k,k+1$, and $k+2$ classical cusp forms, respectively. These have Atiyah-Hirzebruch filtrations $0,2,$ and $4$ respectively: this is simply a matter of reading off the dimensions of the cells on which $f_k,f_{k+1},$ and $f_{k+2}$ are supported. 
Consequently it follows from Examples \ref{product examples} and Definition \ref{def of topological petersson product} that the topological Petersson product $\langle f,g\rangle_{\mathbb{C}P^2}$ is the nonhomogeneous element $\langle f_k,g_k\rangle + \langle f_{k+1},g_{k+1}\rangle\in H^*(\mathbb{C}P^2;\mathbb{C})$. Here $\langle f_k,g_k\rangle$ is the classical Petersson product of $f_k$ and $g_k$, regarded as an element of $\mathbb{C}\cong H^0(\mathbb{C}P^2;\mathbb{C})$, while $\langle f_{k+1},g_{k+1}\rangle$ is the classical Petersson product of $f_{k+1}$ and $g_{k+1}$, regarded as an element of $\mathbb{C}\cong H^4(\mathbb{C}P^2;\mathbb{C})$.

Note that the topological Petersson product over $\mathbb{C}P^2$ is degenerate, since it is insensitive to the part of the topological cusp form supported on the top cell in $\mathbb{C}P^2$, a phenomenon we noticed already in Examples \ref{product examples}. 
\end{example}

\begin{theorem}\label{compact kahler mfld thm}
Let $X$ be a compact K\"{a}hler manifold of complex dimension $d$. Write $\tcf^*_{\mathbb{C}}(X)^{\leq 2d/3}$ for the sub-$\mathbb{C}$-vector space of $\tcf^*(X)\otimes_{\mathbb{Z}}\mathbb{C}$ spanned by the elements of Atiyah-Hirzebruch degree $n$ such that $n$ is even and $n\leq 2d/3$. Then the topological Petersson product is a nondegenerate $\mathbb{R}$-bilinear form on $\tcf^*_{\mathbb{C}}(X)^{\leq 2d/3}$.
\end{theorem}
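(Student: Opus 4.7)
The plan is to reduce the nondegeneracy assertion to a purely cohomological statement about the cup product on $H^*(X;\mathbb{C})$, which can then be resolved by the Hard Lefschetz theorem.

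First I would exploit the fact that $\tcf\smash H\mathbb{C}$ splits as a wedge of Eilenberg--Mac Lane spectra (since it is rational), so that the Atiyah--Hirzebruch spectral sequence computing $\tcf^*(X)\otimes_{\mathbb{Z}}\mathbb{C}$ collapses to give a canonical splitting
\[ \tcf^*(X)\otimes_{\mathbb{Z}}\mathbb{C} \;\cong\; \bigoplus_{n} H^n(X;\mathbb{C})\otimes_{\mathbb{C}} \bigl(\tcf^{*-n}\otimes_{\mathbb{Z}}\mathbb{C}\bigr). \]
By formula \eqref{product formula 200}, the pairing $\langle f_n,g_{n'}\rangle_X$ between AH-homogeneous elements vanishes whenever $n\neq n'$ or whenever the weights of the cusp forms involved differ, and in the matched case lands in $H^{2n}(X;\mathbb{C})$. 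It therefore suffices, for each fixed weight $k+n/2$ and each even $n\leq 2d/3$, to show that every nonzero element $f = \sum_h f_h\otimes x_h$ of $(S_{k+n/2}\otimes_{\mathbb{Z}}\mathbb{C})\otimes_{\mathbb{C}} H^n(X;\mathbb{C})$ admits a partner $g$ with $\langle f,g\rangle_X\neq 0$ in $H^{2n}(X;\mathbb{C})$.

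Next I would invoke the classical Petersson product's nondegeneracy. After reducing to the case where the $f_h$ are linearly independent and some $x_1\neq 0$, I pick $g_1\in S_{k+n/2}\otimes_{\mathbb{Z}}\mathbb{C}$ satisfying $\langle f_1,g_1\rangle = 1$ and $\langle f_h,g_1\rangle = 0$ for $h\neq 1$; this is possible because the classical Petersson form is nondegenerate on $S_{k+n/2}\otimes_{\mathbb{Z}}\mathbb{C}$. Setting $g := g_1\otimes y_1$ for $y_1\in H^n(X;\mathbb{C})$ to be chosen, formula \eqref{product formula 200} yields
\[ \langle f,g\rangle_X \;=\; x_1\cup y_1 \;\in\; H^{2n}(X;\mathbb{C}). \]
The theorem thus reduces to the cohomological statement: for every nonzero $x_1\in H^n(X;\mathbb{C})$, there exists $y_1\in H^n(X;\mathbb{C})$ with $x_1\cup y_1\neq 0$ in $H^{2n}(X;\mathbb{C})$.

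Finally I would settle this using Hard Lefschetz, the essential use of the K\"{a}hler hypothesis. Since $n\leq 2d/3\leq d$, cup-product with $\omega^{d-n}$ (where $\omega\in H^2(X;\mathbb{C})$ is the K\"{a}hler class) is an isomorphism $H^n(X;\mathbb{C})\xrightarrow{\sim}H^{2d-n}(X;\mathbb{C})$. Poincar\'{e} duality provides some $w\in H^{2d-n}(X;\mathbb{C})$ with $\int_X x_1\cup w\neq 0$; writing $w = \omega^{d-n}\cup y_1$ via Hard Lefschetz yields $\int_X(x_1\cup y_1)\cup\omega^{d-n}\neq 0$, and hence $x_1\cup y_1\neq 0$ in $H^{2n}(X;\mathbb{C})$. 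The main obstacle I anticipate is not this clean cohomological step but the foundational check that the pairing truly splits across Atiyah--Hirzebruch degrees and weights in the rational setting, so that off-diagonal contributions vanish exactly (not merely modulo higher filtration) and the nonzero main term in $H^{2n}(X;\mathbb{C})$ cannot be cancelled by corrections of higher cohomological degree; both follow from the collapse of the AHSS for rational spectra together with the triviality of the AH filtration on $H^m(X;\mathbb{C})$ above degree $m$.
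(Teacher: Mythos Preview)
Your argument is correct and shares the paper's overall strategy---reduce via formula~\eqref{product formula 200} to a cup-product nondegeneracy statement on $H^*(X;\mathbb{C})$ and settle it with Hard Lefschetz---but the two executions differ in a way worth noting. The paper takes as partner the element $g=\sum_h g_h\otimes[\omega]^{n/2}$, using the single cohomology class $[\omega]^{n/2}$ for every term, and then argues that cup product with $[\omega]^{n/2}$ is injective on $H^n(X;\mathbb{C})$ because the Hard Lefschetz isomorphism $[\omega]^{d-n}\colon H^n\to H^{2d-n}$ factors through it; this factorization is precisely where the bound $n\leq 2d/3$ (equivalently $n/2\leq d-n$) enters. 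You instead isolate a single tensor factor by choosing a Petersson-dual cusp form $g_1$ orthogonal to the remaining $f_h$, reducing to the existence of $y_1\in H^n$ with $x_1\cup y_1\neq 0$, and then produce $y_1$ from Poincar\'e duality combined with Hard Lefschetz. Your route uses only the hypothesis $n\leq d$, so it in fact establishes nondegeneracy on the larger subspace $\tcf^*_{\mathbb{C}}(X)^{\leq d}$---a genuine sharpening of the stated theorem (consistent, for instance, with the $\mathbb{C}P^2$ computation in Examples~\ref{product examples}, where the product is nondegenerate in Atiyah--Hirzebruch degrees $0$ and $2=d$, or with a K3 surface, where $[\omega]\colon H^2\to H^4$ has $21$-dimensional kernel yet the intersection form on $H^2$ is nondegenerate). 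The paper's choice of the explicit K\"ahler power has the virtue of giving a uniform concrete witness, but pays for it with the tighter numerical constraint.
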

\begin{proof}
Let $\omega$ be a K\"{a}hler $2$-form on $X$. 
Given an integer $n$ such that $n < d$, the hard Lefschetz theorem gives us that multiplication by $[\omega]^{d-n}$ yields an isomorphism $H^n(X;\mathbb{C})\rightarrow H^{2d-n}(X;\mathbb{C})$.
Since we assume $n\leq 2d/3$, we have $2n\leq 2d-n$, and consequently multiplication by $[\omega]^{d-n}$ factors through the multiplication by $[\omega]^{n/2}$ map
\begin{align}\label{mult by power of omega} H^n(X;\mathbb{C})&\rightarrow H^{2n}(X;\mathbb{C}).\end{align}.
Since an isomorphism factors through \eqref{mult by power of omega}, we have that \eqref{mult by power of omega} is injective. Now given a nonzero element \[\sum_h f_h \otimes x_h\in \tcf^*(S^0)\otimes_{\mathbb{Z}}H^*(X; \mathbb{C})\cong \tcf^*(X)\otimes_{\mathbb{Z}}\mathbb{C}\]
with the elements $\{ x_h\}\subseteq H^*(X;\mathbb{C})$ $\mathbb{C}$-linearly independent, choose, for each $h$, a classical cusp form $g_h\in \tcf^*(S^0)$ such that the classical Petersson product $\langle f_h,g_h\rangle$ is nonzero. This is possible since the classical Petersson product is nondegenerate. Let $g = \sum_h g_h\otimes [\omega]^{n/2}\in \tcf^*(S^0)\otimes_{\mathbb{Z}}H^*(X; \mathbb{C})\cong \tcf^*(X)\otimes_{\mathbb{Z}}\mathbb{C}$. Formula \eqref{product formula 200} yields 
\begin{align*}
 \langle f,g\rangle_X 
  &\equiv \sum_h \langle f_h,g_h\rangle \cdot (x_h \cup [\omega]^{n/2}) \neq 0,\end{align*}
since each $\langle f_h,g_h\rangle$ is nonzero and since the elements $\{ x_h \cup [\omega]^{n/2}\}\subseteq H^{2n}(X;\mathbb{C})$ are $\mathbb{C}$-linearly independent due to the hard Lefschetz theorem.
\end{proof}

\bibliographystyle{alpha}

\def\cprime{$'$} \def\cprime{$'$} \def\cprime{$'$} \def\cprime{$'$}

\end{document}